\newtheorem{thm}{Theorem}[section]
\newtheorem{lem}[thm]{Lemma}
\newtheorem{prop}[thm]{Proposition}
\newtheorem{defn}[thm]{Definition}
\newtheorem{rmk}[thm]{Remark}
\def\sg{\sigma}
\def\dim{\mbox{\rm dim }}
\def\l.l.o.{\it l.l.o}
\def\eps{\varepsilon}
\def\medno{\medskip\noindent}
\font\amsy=msbm10
\def\chiup{\raise 2pt\hbox{$\chi$}}
\DeclareMathOperator{\spn}{span}
\DeclareMathOperator{\orbit}{orbit}
\begin{document}

\centerline{\bf THE GOLOD SHAFAREVICH COUNTER--EXAMPLE }\par
\centerline{\bf WITHOUT HILBERT SERIES}
\vskip 0.5truecm
%

 \centerline{\bf Alon Regev}\vskip 10pt \centerline{\bf
Department of Mathematicsal Sciences}\par \centerline{\bf Northern
Illinois University }\par \centerline{\bf DeKalb, Illinois
60115}\par \centerline{\it E-Mail:~~regev@math.niu.edu}\vskip 5pt
\bigskip\bigskip

\centerline{\bf Amitai Regev}\vskip 10pt \centerline{\bf
Department of Theoretical Mathematics}\par \centerline{\bf The
Weizmann Institute of Science}\par \centerline{\bf Rehovot 76100,
Israel}\par \centerline{\it
E-Mail:~~amitai.regev@weizmann.ac.il}\vskip 5pt
\bigskip\bigskip

\date{\today}

\noindent Abstract: Let $F$ be an arbitrary field. The
Golod--Shafarevich  example of a finitely generated nil
$F$-algebra which is infinite dimensional -- is revisited. Here we
offer a rather elementary treatment of that example, in which
induction replaces Hilbert series techniques. This note also
contains a detailed exposition of the construction of that
example.

\newpage
\section{Introduction}

Throughout this paper $F$ is a field; $d\ge 2$ ~a natural number,
and $T=F\{x_1,\ldots,x_d\}$ is the associative non-commutative
algebra of the polynomials in $d$ variables. Let $T_{\ge 1}\subset
T$
denote the polynomials with no constant term. If $I\subseteq
T_{\ge 1} $ is a two sided ideal then the quotient algebra $
T_{\ge 1}/I $ is finitey generated.
\begin{thm}\label{main.theorem1}\cite{golodshafarevich}
There exists a homogeneous two sided ideal
 $I\subset T_{\ge 1}$, such that the finitely generated algebra
$A=T_{\ge 1}/I$ is both nil and infinite dimensional.
\end{thm}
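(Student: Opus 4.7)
The plan is the Golod--Shafarevich construction, with the usual Hilbert--series estimate replaced by a direct inductive bound on the graded dimensions $a_n$ of the degree-$n$ component of $T/I$ (so $a_0 = 1$). I would build $I$ as the homogeneous two-sided ideal of $T$ generated by a countable sequence of relations $f_i = h_i^{N_i}$, where $\{h_i\}_{i \ge 1}$ is an enumeration of homogeneous elements of $T_{\ge 1}$ sufficient to force nil-ness of $A = T_{\ge 1}/I$, and each exponent $N_i \ge 2$ is chosen inductively to be very large, so that the relations added are sparse in each degree.

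The technical core is the inequality
\[
a_n \;\ge\; d\, a_{n-1} \;-\; \sum_{k=2}^{n} r_k \, a_{n-k}, \qquad a_0 = 1,
\]
where $r_k$ counts the relations of degree exactly $k$. Rather than deriving this from a generating-function manipulation, I would verify it directly by induction on $n$: the homogeneous piece $I \cap T_n$ is spanned by products $u \cdot f_i \cdot v$ with $u, v$ monomials of total degree $n - \deg f_i$, and its dimension is bounded above, via the inductive hypothesis applied in smaller degrees, by $\sum_k r_k \, a_{n-k}$; comparing with $T_n$ (of dimension $d^n$) and the degree-$(n-1)$ case yields the recurrence. This is the inductive replacement of the Hilbert--series argument promised in the introduction.

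The main obstacle is choosing the exponents $N_i$ so that positivity of $a_n$ propagates to every $n$. At step $i$, having already secured $a_0, \ldots, a_M > 0$, I pick $N_i$ so large that $N_i \deg h_i > M$ and, moreover, that in each relevant degree $n$ the new relation's contribution $r_{N_i \deg h_i}\, a_{n - N_i \deg h_i}$ is dominated by a fixed fraction (say $1/2$) of $d\, a_{n-1}$. Such a choice is always available because $N_i$ is free, and an easy induction then yields an exponential lower bound of the form $a_n \ge c\,(d-\eps)^n$ for infinitely many $n$, proving that $A$ is infinite dimensional. Nil-ness of $A$ follows from the construction: each $h_i^{N_i}$ lies in $I$ by design, and the enumeration $\{h_i\}$ is arranged, using the graded structure of $A$, so that every element of $A$ has a power in $I$. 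Balancing the sparsity requirement needed for the recurrence against the density requirement needed for nil-ness is the delicate part, and is exactly what the inductive choice of $N_i$ is designed to accomplish.
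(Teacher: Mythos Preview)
Your outline has two genuine gaps, one of them serious.

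\medskip
\textbf{The nil step is not established.} You propose to take $f_i=h_i^{N_i}$ where $\{h_i\}$ runs over \emph{homogeneous} elements of $T_{\ge 1}$, and then assert that ``the enumeration $\{h_i\}$ is arranged, using the graded structure of $A$, so that every element of $A$ has a power in $I$.'' This is precisely the hard point, and your sketch does not address it. Knowing that every homogeneous element is nilpotent modulo $I$ does not imply that a sum of homogeneous elements is nilpotent modulo $I$; a nil algebra is much more than an algebra whose homogeneous elements are nil. Moreover, if $F$ is uncountable there is no countable enumeration of the homogeneous elements at all, so your sequence $\{f_i\}$ cannot even hit each one. The paper solves both problems at once with a device you are missing: for each degree bound $c$ it lists the $q=d+\cdots+d^{c}$ monomials $M_1,\ldots,M_q$ and throws into $I$ the ``order-symmetric'' polynomials $s_{\underline j}(M_1,\ldots,M_q)$, $\underline j\in J(q,n)$. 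Because scalar coefficients commute, the expansion of $g^n$ for \emph{any} $g=\sum\alpha_iM_i$ of degree $\le c$ is a linear combination of exactly these $s_{\underline j}$'s, so $g^n\in I$ over an arbitrary field. The number of relations added is $|J(q,n)|=\binom{n+q-1}{q-1}$, polynomial in $n$, hence can be made $\le\eps^2(d-2\eps)^{n-2}$ by taking $n$ large. That polynomial-versus-exponential count is what replaces your ad hoc choice of a single large exponent $N_i$.

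\medskip
\textbf{The inequality sketch is also off.} You write that $\dim(I\cap T_n)$ is bounded by $\sum_k r_k\,a_{n-k}$. That is false in general; the naive span of products $u f_i v$ has dimension bounded by $\sum_k r_k\,d^{\,n-k}$, not $\sum_k r_k\,a_{n-k}$. The correct argument (Theorem~\ref{golod.theorem} here) first proves the structural identity $I=IT_1+BR$ with $B$ a graded complement of $I$, which gives $\dim I_n\le d\cdot\dim I_{n-1}+\sum_k r_k\,b_{n-k}$; substituting $\dim I_n=d^n-b_n$ then yields the recurrence. Your phrase ``via the inductive hypothesis applied in smaller degrees'' does not produce the $b_{n-k}$ factor; the decomposition $I=IT_1+BR$ is what does.

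\medskip
Once the inequality is in hand, your plan to prove positivity of the $b_n$ by induction rather than Hilbert series is exactly the paper's point (Proposition~\ref{p2}), though the paper carries it out against the uniform hypothesis $r_\ell\le\eps^2(d-2\eps)^{\ell-2}$ and obtains $b_n\ge(d-\eps)^n$ for all $n$, not merely infinitely many.
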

\begin{defn}\label{definition1}
Let $H=\{f_1,f_2,\ldots\}$ be a sequence of {\it homogeneous}
polynomials. We assume that all $\deg f_j\ge 2$. Let $r_\ell$ be
the number of elements of $H$ of degree $\ell$. Let $I=I_H$ be the
two sided ideal generated in $T_{\ge 1}$ by $H$. Call  $H$  a
``G.S. sequence" if its ideal $I_H$ and numbers $r_\ell$ satisfy
\begin{enumerate}
\item
For every polynomial $g\in T_{\ge 1}$ there exists $n$ such that
$g^n\in I$.
\item
For some $\eps>0$ satisfying $d-2\eps>1,$ $r_\ell\le\eps
^2(d-2\eps)^{\ell-2}$ for all $\ell\ge2$.
\end{enumerate}
\end{defn}
Theorem \ref{main.theorem1} is a corollary of the following two
theorems.

\begin{thm}\label{main.theorem2}
Let $H\subset T_{\ge 1}$ be a G.S. sequence, then the quotient
algebra $T_{\ge 1}/I_H$ is both nil and infinite dimensional.
\end{thm}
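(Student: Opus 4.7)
The nilness is immediate from condition~(1): every $g \in T_{\ge 1}$ satisfies $g^n \in I_H$ for some $n$, so its image in $A = T_{\ge 1}/I_H$ is nilpotent. The substantive content is the infinite--dimensionality, and my plan has two stages.

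First, since $I_H$ is homogeneous, $A$ inherits a grading; set $a_n = \dim_F A_n$ with the convention $a_0 := 1$. I would establish the Golod--Shafarevich recursive inequality
\[
a_n \;\ge\; d\,a_{n-1} \;-\; \sum_{\ell=2}^{n} r_\ell\, a_{n-\ell} \qquad (n \ge 1)
\]
by analyzing the multiplication surjection $V \otimes A_{n-1} \twoheadrightarrow A_n$, where $V = T_1$. Its kernel has dimension exactly $d\,a_{n-1} - a_n$, and the plan is to bound it above by $\sum_\ell r_\ell\, a_{n-\ell}$ by tracing how any relation $\sum x_i \tilde a_i \in I_n$ must arise from the defining generators $f_\ell \in H$ post--multiplied by elements of $A_{n-\ell}$ (with left--multiplications absorbed into the tensor factor $V$). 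As a sanity check, at $n=1$ this forces $a_1 \ge d$, matching the trivial $a_1 = d$; at $n = 2$ it gives $a_2 \ge d^2 - r_2$, matching the obvious bound.

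Second, in place of Hilbert--series analysis, I would run an induction on partial sums at the distinguished point $t_0 := 1/(d-\eps)$. Using condition~(2) and a geometric series,
\[
R(t_0) \;:=\; \sum_{\ell \ge 2} r_\ell\, t_0^\ell \;\le\; \frac{\eps^2 t_0^2}{1 - (d-2\eps)t_0} \;=\; \frac{\eps}{d-\eps},
\]
so $d t_0 - R(t_0) \ge 1$. Set $U_N := \sum_{n=0}^{N} a_n\, t_0^n$. Multiplying the recursion by $t_0^n$, summing over $n = 1,\dots,N$, swapping the double sum, and using the monotonicity $U_{N-\ell} \le U_{N-1}$ for $\ell \ge 2$ (partial sums of nonnegative terms), one obtains
\[
U_N - 1 \;\ge\; d t_0\, U_{N-1} - R(t_0)\, U_{N-1} \;=\; \bigl(d t_0 - R(t_0)\bigr) U_{N-1} \;\ge\; U_{N-1}.
\]
Hence by induction $U_N \ge N+1$, so $U_N \to \infty$. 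Since any truncation of $\sum a_n t_0^n$ is finite, infinitely many $a_n$ must be positive, i.e.\ $A$ is infinite--dimensional.

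The principal obstacle is the first stage: giving a fully elementary proof of the G.S.\ recursion, without appealing to a Koszul--type free resolution or a graded Euler--characteristic identity for the trivial $A$--module. Once the recursion is in place, the partial--sum induction at $t_0$ is a short telescoping argument that plays the role of ``the Poincar\'e series of $A$ diverges at $t_0 = 1/(d-\eps)$'' in the classical Hilbert--series proof, and the condition $d - 2\eps > 1$ in Definition~\ref{definition1} is exactly what makes $t_0 < 1$ and the key estimate $d t_0 - R(t_0) \ge 1$ meaningful.
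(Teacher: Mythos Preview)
Your outline is sound, but it diverges from the paper in both stages, and in one place you leave exactly the gap the paper's Section~\ref{s2} is designed to close.

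\textbf{Stage 1 (the recursion).} You correctly flag this as the obstacle and do not prove it. The paper settles it elementarily by Vinberg's identity: from $T = TT_1 \oplus F$ one gets $I = IT_1 + TR$, and then $T = I \oplus B$ with $R \subseteq TT_1$ gives $TR \subseteq IT_1 + BR$, whence $I = IT_1 + BR$. Taking degree-$n$ components and dimensions yields $\dim I_n \le d\,\dim I_{n-1} + \sum_{k\ge 2} b_{n-k} r_k$, which rearranges to your inequality. Your sketch (``left--multiplications absorbed into the tensor factor $V$'') is the right intuition, but the clean mechanism is precisely this $IT_1 + BR$ decomposition rather than a direct analysis of the kernel of $V\otimes A_{n-1}\to A_n$.

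\textbf{Stage 2 (infinite--dimensionality from the recursion).} Your partial-sum argument at $t_0 = 1/(d-\eps)$ is correct: the swap of sums and the monotonicity $U_{N-\ell}\le U_{N-1}$ are legitimate, the computation $d t_0 - R(t_0) \ge 1$ is exact, and $U_N \ge N+1$ follows. However, this is essentially the Hilbert-series proof carried out with finite truncations --- exactly the method the paper sets out to \emph{replace}. The paper instead proves the pointwise bound $b_n \ge (d-\eps)^n$ by a pure double induction (Proposition~\ref{p2}): with $v=\eps$, $c=\eps^2$, $u=d-2\eps$ one first shows $v b_{n+1}\ge \sum_{j=0}^{n} c u^{n-j} b_j$ by induction, then combines this with \eqref{eq1} to get $b_{n+2}\ge (d-v) b_{n+1}$. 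The paper's route buys the stronger conclusion (exponential growth of each $b_n$) and avoids any series machinery; your route is shorter once the recursion is in hand but yields only ``infinitely many $a_n>0$''. A small slip: the condition $d-2\eps>1$ is not what forces $t_0<1$ (that would be $d-\eps>1$), and in fact your argument never uses $t_0<1$ --- only $(d-2\eps)t_0<1$, which needs merely $\eps>0$.
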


\begin{thm}\label{main.theorem3}
There exist G.S. sequences.
\end{thm}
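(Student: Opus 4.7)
I would construct $H$ recursively, stage by stage. Fix $\eps>0$ with $d-2\eps>1$, and let $L\in\NN$ be the least integer satisfying $\eps^2(d-2\eps)^{L-2}\ge 1$. Assuming $F$ is countable, enumerate $T_{\ge 1}=\{g_1,g_2,\ldots\}$; the general case reduces to the countable one by performing the construction over the prime subfield $F_0\subseteq F$ and then base-changing. At stage $i$ I would append to $H$ those relations which force $g_i^{n_i}\in I_H$ for a suitably chosen (large) exponent $n_i$.

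Concretely, write each $g_i=f_{i,1}+\cdots+f_{i,k_i}$ as the sum of its nonzero homogeneous components, of strictly increasing degrees $d_{i,1}<\cdots<d_{i,k_i}$. Inductively choose $n_i\in\NN$ so that
\[
n_i\cdot d_{i,1}\ >\ \max\bigl(L,\ n_{i-1}\cdot d_{i-1,k_{i-1}}\bigr).
\]
For every $\ell$ in the range $[n_i d_{i,1},\,n_i d_{i,k_i}]$, the degree-$\ell$ homogeneous component of $g_i^{n_i}$ is a single polynomial (the sum of those products $f_{i,\alpha_1}\cdots f_{i,\alpha_{n_i}}$ whose total degree equals $\ell$). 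Append each nonzero such component to $H$.

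Verification of the two conditions of Definition~\ref{definition1} is then clean. For condition (1): since $I_H$ is homogeneous and every homogeneous component of $g_i^{n_i}$ lies in $H\subseteq I_H$, the sum $g_i^{n_i}\in I_H$. For condition (2): the inductive choice of $n_i$ arranges that the degree ranges $[n_i d_{i,1},n_i d_{i,k_i}]$ of different stages are disjoint, and within a single stage at most one polynomial is added per degree. Hence $r_\ell\le 1$ for $\ell\ge L$ and $r_\ell=0$ for $\ell<L$; both cases are dominated by $\eps^2(d-2\eps)^{\ell-2}$, by the choice of $L$ together with monotonicity of $(d-2\eps)^{\ell-2}$ in $\ell$.

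\textbf{Main obstacle.} Phrased this way, the counting argument is almost automatic, precisely because the bound $\eps^2(d-2\eps)^{\ell-2}$ grows without bound in $\ell$: no matter how many relations the construction has committed to by stage $i-1$, there is always room at sufficiently large degrees to absorb the components of $g_i^{n_i}$. The one genuine subtlety is the reduction from arbitrary $F$ to the countable enumeration framework, which requires checking both that the ideal produced over $F_0$ tensors correctly up to $F$ and that condition (1) passes to every element of $T_{\ge 1}$, not merely to those with coefficients in a countable subfield.
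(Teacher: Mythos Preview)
For countable $F$ your construction is correct and pleasantly direct: enumerate $T_{\ge 1}$, push each $g_i^{n_i}$ into pairwise disjoint degree ranges beyond $L$, and you get $r_\ell\le 1$ throughout. This is a genuinely different route from the paper's, and over a countable field it is simpler.

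The gap is precisely the point you yourself flag as the ``main obstacle'': the reduction from arbitrary $F$ to the prime subfield $F_0$. What you need is that the nil algebra $A_0=F_0\{x_1,\ldots,x_d\}_{\ge1}/I_0$ stays nil after base change to $F$, and this is \emph{not} a routine check. Nilness is not preserved by field extension in general---by Smoktunowicz there exist nil algebras $A$ over countable fields with $A[t]$, hence $A\otimes_{F_0}F_0(t)$, not nil---so one would have to argue something special about your particular $A_0$, and nothing in the construction supplies that. Concretely, your relations force $g^{n_i}\in I$ only for the enumerated $g_i$ with $F_0$-coefficients; an element such as $x_1+\alpha x_2$ with $\alpha\in F\setminus F_0$ never appears in the list, and there is no mechanism offered for why some power of it should land in $I$. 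Over an uncountable $F$ the argument, as it stands, is incomplete.

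The paper sidesteps this by treating all $g$ of a given degree bound simultaneously, over any field. Writing $g=\sum_{i=1}^q\alpha_iM_i$ with $M_1,\ldots,M_q$ the monomials of degree between $1$ and $c$, one has
\[
g^n=\sum_{\underline{j}\in J(q,n)}(\alpha_{j_1}\cdots\alpha_{j_n})\,s_{\underline{j}}(M_1,\ldots,M_q),
\]
where $s_{\underline{j}}$ is the order-symmetric sum over the $S_n$-orbit of $\underline{j}$. The ${n+q-1\choose q-1}$ polynomials $h_{\underline{j}}=s_{\underline{j}}(M_1,\ldots,M_q)$ depend only on $c$ and $n$, not on the coefficients of $g$; putting them into $H$ forces $g^n\in I$ for \emph{every} $g$ of degree $\le c$, over any coefficient field. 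One pays by adding polynomially many (in $n$) relations per stage rather than one per degree, but polynomial growth is still dominated by $\eps^2(d-2\eps)^{n-2}$, so condition~(2) survives. In effect the paper runs your idea on a single \emph{generic} element $\sum t_iM_i$ and then takes components with respect to the auxiliary indeterminates $t_i$ rather than with respect to the $x$-degree; that change is exactly what makes the construction uniform in $F$.
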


The first step in proving Theorem \ref{main.theorem2} is Theorem
\ref{golod.theorem} \cite{golod1} of Section \ref{s2},  which
applies to any homogeneous ideal $I\subseteq T_{\ge}$
and which establishes the basic inequality \eqref{eq1} below. This
inequality, together with conditions 2 of Definition
\ref{definition1}, imply Proposition \ref{p1}, which states that
$A=T_{\ge 1}/I$ is infinite dimensional. Note that condition 1 of
Definition \ref{definition1} implies that this algebra $A$ is nil.
In Section \ref{s4} we prove the existence of G.S. sequences, see
Theorem \ref{g.s.exist1}, thus proving Theorem \ref{main.theorem3}
and completing the proof of Theorem \ref{main.theorem1}.

\medskip
All the proofs of Proposition \ref{p1} known to us, apply the
techniques of Hilbert series, see \cite{golod1}, \cite{golod2},
 \cite{golodshafarevich}, \cite{herstein}, \cite{rowen} and \cite{vinberg}. In Section
\ref{s3} below we present a new and rather elementary proof of
that proposition, a proof where Hilbert series considerations  are
replaced by an induction argument. The presentation here of the
Golod-Shafarevich theorem is completely elementary.

\section{The basic inequality}\label{s2}
Let $T_n\subset T$ denote the homogeneous polynomials of total
degree $n$, so $\dim T_n=d^n$ and
$$T=\bigoplus _{n=0}^{\infty}T_n.$$
We denote $$ T_{\ge k}=\bigoplus _{n=k}^{\infty}T_n.
$$ Thus
$T_{\ge 1}\subseteq T$  are the polynomials with no constant term.

\medskip
Let $H=\{f_1,f_2,\ldots\}$ be a sequence of {\it homogeneous}
polynomials. We assume that all\\ $\deg f_j\ge 2$, so $H \subset
T_{\ge 2}$. Let $r_n$ be the number of elements of $H$ of degree
$n$. Thus
\[H=\bigcup_{n=2}^{\infty}H_n,\quad\mbox{where}\quad H_n = \{f_j \mid \deg f_j=n\}\quad\mbox{and}
\quad |H_n|=r_n. \]

Let $R=\spn_F\{f_1,f_2,\ldots\}$, so
$$R=\bigoplus_{n=0}^{\infty}R_n=\bigoplus_{n=2}^{\infty}R_n,\quad\mbox{where}\quad R_n=\spn H_n.$$
 Note that $\dim R_n\le r_n$. Since
all $\deg f_j\ge 2,$ we have $r_0=r_1=0$ and $R\subseteq T_{\ge
2}\subseteq T_{\ge 1}$. Since $T_{\ge 1}=T T_1$, we have
$R\subseteq T T_1$.
\medskip
\noindent Let $I=\langle f_1,f_2,\ldots\rangle$ be the two--sided
ideal generated in $T$ by the sequence $H$, so that
$$I=TRT\subseteq T_{\ge 2}\subseteq T_{\ge 1}.$$ Let $A=T_{\ge
1}/I$ be the corresponding algebra. Since $I$ is generated by
homogeneous polynomials of degrees $\ge 2$,
$$I=\bigoplus_{n=0}^{\infty} I_n=\bigoplus_{n=2}^{\infty} I_n$$
where $I_n=I\cap T_n.$ Let $B_n\subseteq T_n $ be a complement
vector space of $I_n$:
\begin{equation}\label{bn}
T_n=I_n\oplus B_n,
\end{equation}
 and denote $b_n=\dim B_n$. Since
$I_0=I_1=0$, hence $B_0=T_0=F$ and $B_1=T_1$, so $b_0=\dim_F F =
1$ and $b_1=\dim_F T_1=d$. Denote $B=\bigoplus_{n\ge 0} B_n$, so
$$T=I\oplus B.$$

\noindent With these notations we prove

\begin{thm}\label{golod.theorem}\cite{golod1}
Let $d\ge 2$, $T=F\{x_1,\ldots,x_d\}=\bigoplus_nT_n$, $I\subseteq
T$ a homogeneous two-sided ideal, $I=\oplus_nI_n$. Write
$T_n=I_n\oplus B_n$ and let $b_n=\dim B_n$. Recall that
$I=\langle f_1,f_2,\ldots\rangle$ where the $f_j$'s are
homogeneous of degrees $\ge 2$, and let $r_\ell$ be the number of
$f_j$'s of degree $\ell$ (thus $r_0=r_1=0$). Then for all $n\ge2$
\begin{equation} \label {eq1}
b_n\ge db_{n-1}-\sum^{n-2}_{j=0}r_{n-j}b_j
\end{equation}
\end{thm}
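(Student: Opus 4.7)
The plan is to rewrite the desired inequality as an upper bound on $\dim I_n$ and prove that bound via a structural decomposition of $I_n$. From $T_n = I_n \oplus B_n$ we have $\dim I_n = d^n - b_n$ and similarly $\dim I_{n-1} = d^{n-1} - b_{n-1}$, so inequality \eqref{eq1} is equivalent to
\[ \dim I_n \le d\,\dim I_{n-1} + \sum_{j=0}^{n-2} r_{n-j} b_j. \]

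The heart of the argument is the structural claim
\[ I_n = T_1 I_{n-1} + \sum_{k=2}^{n} R_k B_{n-k}. \]
The inclusion $\supseteq$ is immediate from $I = TRT$. For $\subseteq$, I would take a typical spanning element $u f_j v$ of $I_n$ with $u,v$ homogeneous monomials and $\deg f_j = k$. If $\deg u \ge 1$, write $u = x_i u'$, so $u f_j v = x_i (u' f_j v) \in T_1 I_{n-1}$. Otherwise $u$ is a scalar, WLOG $u = 1$, and I decompose $v = v_1 + v_2$ with $v_1 \in I_{n-k}$ and $v_2 \in B_{n-k}$. The term $f_j v_2$ lies in $R_k B_{n-k}$. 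For $f_j v_1$, since $k \ge 2$ every monomial of $f_j$ begins with a variable, so $f_j = \sum_i x_i g_i$ with $g_i \in T_{k-1}$; then $f_j v_1 = \sum_i x_i (g_i v_1) \in T_1 I_{n-1}$ because each $g_i v_1 \in T \cdot I \subseteq I$ is homogeneous of degree $n-1$.

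Once the decomposition is established, dimensions are straightforward: the unique left-to-right factorization of monomials gives a vector-space isomorphism $T_1 \otimes T_{n-1} \cong T_n$ by multiplication, which restricts to $T_1 \otimes I_{n-1} \cong T_1 I_{n-1}$, so $\dim T_1 I_{n-1} = d\,\dim I_{n-1}$; and obviously $\dim R_k B_{n-k} \le \dim R_k \cdot \dim B_{n-k} \le r_k b_{n-k}$. Summing over $k = 2,\ldots,n$ and reindexing by $j = n-k$ yields the displayed upper bound on $\dim I_n$, which is equivalent to \eqref{eq1}.

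The main obstacle is the inclusion in the structural claim, specifically the term $f_j v_1$ with $v_1 \in I_{n-k}$: naively this lies only in $R_k I$, which need not be contained in $T_1 I_{n-1}$. The saving point is precisely the standing hypothesis $\deg f_j \ge 2$, which forces $R_k \subseteq T_1 T_{k-1}$ so that a variable can be factored out on the left and $v_1$ absorbed into $I_{n-1}$; this is where the condition $r_0 = r_1 = 0$ enters essentially.
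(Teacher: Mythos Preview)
Your proof is correct and is essentially the left--right mirror of the paper's argument. The paper proves the global identity $I = IT_1 + BR$ by short module manipulations --- writing $T = TT_1 \oplus F$ to get $I = TRT = IT_1 + TR$, then $TR = (I\oplus B)R \subseteq IT_1 + BR$ using $R \subseteq TT_1$ --- and only then passes to the $n$-th component $I_n = I_{n-1}T_1 + \sum_k B_{n-k}R_k$; you instead establish the mirrored decomposition $I_n = T_1 I_{n-1} + \sum_k R_k B_{n-k}$ directly by chasing spanning elements $u f_j v$. The key step is the same in both: the hypothesis $\deg f_j \ge 2$, equivalently $R \subseteq T_1 T$ (or $TT_1$), is exactly what lets one absorb the ``bad'' term into $T_1 I_{n-1}$ (resp.\ $I T_1$). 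Your remark that multiplication gives an isomorphism $T_1 \otimes I_{n-1} \cong T_1 I_{n-1}$, so that $\dim T_1 I_{n-1} = d\,\dim I_{n-1}$ exactly, is a pleasant sharpening of the paper's cruder bound $\dim(I_{n-1}T_1) \le d\,\dim I_{n-1}$, though only the inequality is needed.
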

\begin{proof} (Following \cite{vinberg}). Recall that
$R=\spn_F\{f_1,f_2,\ldots\}$ and that $T=I\oplus B$. We first show
that
\begin{equation} \label{eq2}
I=IT_1+BR.
\end{equation}
Note that
$T=T_{\ge 1}\oplus F=TT_1\oplus F$, hence
\begin{equation} \label{eq3}
I=TRT=TR(TT_1\oplus F)=(TRT)T_1+TR=IT_1+TR.
\end{equation}
Now $T=I\oplus B$, $R\subseteq TT_1$ and $IT=I$, hence
\begin{equation}\label{eq4}
TR=(I\oplus B)R=IR+BR\subseteq ITT_1+BR=IT_1+BR.
\end{equation}
Since $IT_1+IT_1=IT_1$, by \eqref{eq3} and \eqref{eq4}
\begin{equation}\label{eq5}
I=IT_1+TR \subseteq IT_1+(IT_1+BR)\subseteq IT_1+BR.
\end{equation}
Since $I \supseteq IT_1,BR$, we conclude that $I=IT_1+BR$, proving
\eqref{eq2}.

\bigskip \noindent Taking the $n$--th homogeneous component
of \eqref{eq2} yields
\begin{equation}\label{eq6}
I_n=I_{n-1}T_1+\sum_{k=0}^nB_{n-k}R_k=I_{n-1}T_1+\sum_{k=2}^nB_{n-k}R_k.
\end{equation}
Note that $\dim (I_{n-1} T_1) \le (\dim I_{n-1}) (\dim{T_1}) =
(\dim I_{n-1}) d$ and $\dim (B_{n-k} R_k) \le b_{n-k} r_k$.
 Taking dimensions on both sides of \eqref{eq6} we
obtain
\begin{equation}\label{eq7}
\dim I_n\le (\dim I_{n-1})d+\sum_{k=2}^nb_{n-k}r_k.
\end{equation}
Substituting $j=n-k$ in the above sum gives
\begin{equation}\label{eq8}
\dim I_n\le (\dim I_{n-1})d+\sum_{j=0}^{n-2}b_j r_{n-j}
\end{equation}

By \eqref{bn}, $d^n=\dim T_n = \dim I_n + \dim B_n = \dim I_n +
b_n$, so $\dim I_n=d^n-b_n$ and similarly $\dim
I_{n-1}=d^{n-1}-b_{n-1}$. Substituting this into \eqref{eq8}
yields the desired inequality eq1.
\end{proof}

\section{Infinite dimensionality of the algebra $A=T_{\ge
1}/I$}\label{s3}
 At the heart of the Golod-Shafarevich construction is the
 following proposition.
\begin{prop} \label{p1}\cite{golod1}
Let $b_n,\;r_\ell$ be the sequences in Theorem
\ref{golod.theorem}, hence in particular satisfying \eqref{eq1}.
Let $\eps>0$ such that $d-2\eps>1$. If $r_\ell\le\eps
^2(d-2\eps)^{\ell-2}$ for all $\ell\ge2$, then all $b_n\ge1$ (In
fact, it follows that the $b_n$'s grow exponentially), and
therefore the algebra $A=T_{\ge 1}/I$ is infinite-dimensional.
\end{prop}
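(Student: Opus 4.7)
My plan is to bypass Hilbert series entirely by proving, via a single induction on $n$, the ratio estimate
\[ b_n \;\ge\; (d-\eps)\, b_{n-1} \quad \text{for every } n \ge 1. \]
Writing $e = d - \eps$ and $c = d - 2\eps$, the hypothesis gives $e > c > 1$, so this ratio bound together with $b_0 = 1$ immediately yields $b_n \ge e^n$. Since $e > 1$, this forces $b_n \ge 1$ for every $n$ and exhibits the exponential growth, proving that $A = T_{\ge 1}/I$ is infinite dimensional.

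The base case $n = 1$ is immediate from $b_1 = d > d - \eps = e\cdot b_0$. For the inductive step, the crucial observation is that a lower bound on \emph{consecutive} ratios doubles as an upper bound on the individual $b_j$'s: iterating the induction hypothesis $b_m \ge e\, b_{m-1}$ for $m = j+1, \dots, n-1$ yields
\[ b_j \;\le\; e^{-(n-1-j)} b_{n-1} \quad \text{for } 0 \le j \le n-1. \]
Substituting this bound together with $r_{n-j} \le \eps^2 c^{n-j-2}$ into inequality \eqref{eq1}, and changing variable $k = n - j - 2$ in the resulting geometric sum, gives
\[ b_n - e\, b_{n-1} \;\ge\; \eps\, b_{n-1} \;-\; \eps^2\, b_{n-1}\, e^{-1}\sum_{k=0}^{n-2} (c/e)^k. \]
Using $1 - c/e = \eps/e$, the geometric sum telescopes cleanly and the right-hand side simplifies to $\eps\,(c/e)^{n-1}\, b_{n-1}$, which is strictly positive. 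This closes the induction.

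The main obstacle is the direction mismatch inherent in \eqref{eq1}: a naive strong induction providing only lower bounds on $b_j$ is useless, since the subtracted sum $\sum r_{n-j} b_j$ demands an \emph{upper} bound on each $b_j$ in order to be controlled. The resolution—inducting on the ratios rather than on the values themselves—produces both a lower and an upper bound in one stroke. That the resulting geometric sum cancels against the leading $\eps\, b_{n-1}$ term with exactly the correct residual sign is guaranteed by the identity $e - c = \eps$; this cancellation is precisely what singles out $e = d - \eps$ as the natural exponential base for the growth rate of $b_n$.
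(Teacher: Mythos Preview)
Your argument is correct. The induction on the ratio $b_n \ge (d-\eps)b_{n-1}$ works exactly as you describe: the hypothesis yields $b_j \le e^{-(n-1-j)} b_{n-1}$, the bound $r_{n-j} \le \eps^2 c^{\,n-j-2}$ applies (legitimately, since each $b_j \ge 0$ is a dimension), and the geometric sum collapses via $1 - c/e = \eps/e$ to leave the positive remainder $\eps(c/e)^{n-1} b_{n-1}$.

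The paper reaches the same ratio inequality $b_{n+2} \ge (d-\eps)b_{n+1}$, but by a slightly different route. It first isolates a more general statement (Proposition~\ref{p2}, with parameters $c,u,v$) and inducts not on the ratio but on an auxiliary weighted-sum inequality
\[
v\,b_{n+1} \;\ge\; \sum_{j=0}^{n} c\,u^{n-j} b_j,
\]
from which the ratio bound is then read off in a second step via \eqref{eq1}. Your version short-circuits this: by inducting directly on the ratio you get the needed \emph{upper} bounds on the earlier $b_j$ for free, so no separate auxiliary inequality is required. The paper's detour through Proposition~\ref{p2} buys a little extra generality in the allowed growth of $r_\ell$, while your argument is tighter and more transparent for the specific hypothesis $r_\ell \le \eps^2(d-2\eps)^{\ell-2}$ at hand.
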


The proofs of Proposition \ref{p1} known to us, all apply the
techniques of Hilbert series ( see \cite{golod1}, \cite{golod2},
 \cite{golodshafarevich}, \cite{herstein}, \cite{rowen} and \cite{vinberg}). In this note we
deduce Proposition \ref{p1} from Proposition \ref{p2}. We then
give Proposition \ref{p2}
 a rather elementary proof, in which
induction replaces Hilbert series techniques

\begin{prop}\label{p2}
Let $b_n,\;r_\ell$ be the sequences in Theorem
\ref{golod.theorem}, hence in particular satisfying \eqref{eq1}.
Let $v>0$ be a real number satisfying the following condition:

There exist real numbers $c, u>0$ satisfying

\medskip
 (a) For all $n\ge0$, ~~$r_{n+2}\le cu^n$, and

\medskip

(b) $\frac{vd-c}{v+u}\ge v$~~~(so in particular $vd>c$)

\medskip

Then, for all $n\ge0$,
\begin{equation}\label{eq10}
b_n\ge(d-v)^n.
\end{equation}
%
\end{prop}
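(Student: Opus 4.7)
The basic inequality \eqref{eq1} has the form $b_n \ge db_{n-1} - \sum_{j=0}^{n-2} r_{n-j} b_j$, so a naive attempt to induct directly on $b_n \ge (d-v)^n$ stalls: controlling the subtracted sum requires \emph{upper} bounds on $b_0, \ldots, b_{n-2}$, whereas induction only produces lower bounds. The plan is to circumvent this by running a \emph{joint} induction that carries an auxiliary upper bound alongside the desired lower bound.

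Introduce the auxiliary sequence
\begin{equation*}
S_n := c \sum_{j=0}^{n-2} u^{n-2-j} b_j \qquad (n \ge 2),
\end{equation*}
which, by hypothesis (a) applied term by term (i.e.\ $r_{n-j} \le c u^{n-j-2}$), dominates the subtracted sum: $\sum_{j=0}^{n-2} r_{n-j} b_j \le S_n$. I propose to prove by induction on $n$ the paired assertions
\begin{equation*}
(P1_n):\; b_n \ge (d-v)\, b_{n-1}\;\;(n \ge 1), \qquad (P2_n):\; S_n \le v\, b_{n-1}\;\;(n \ge 2).
\end{equation*}
Iterating $(P1_n)$ from $b_0 = 1$ immediately yields $b_n \ge (d-v)^n$, which is \eqref{eq10}. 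The base cases are easy: $(P1_1)$ is $d \ge d-v$, and $(P2_2)$ reduces to $c \le v d$, which is implied by (b) since (b) rearranges to $vd - c \ge v(v+u) > 0$. Note also that (b) gives $d - v - u > 0$, hence $d > v$, which legitimizes the division appearing below.

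The engine of the induction is the one-step recurrence $S_{n+1} = uS_n + c\, b_{n-1}$, obtained by peeling off the $j = n-1$ summand. Assuming $(P1_k)$ for $k < n$ and $(P2_k)$ for $2 \le k \le n$, the basic inequality together with $(P2_n)$ immediately yields $(P1_n)$ via $b_n \ge db_{n-1} - S_n \ge (d-v)\, b_{n-1}$. Combining this freshly obtained $(P1_n)$ with $(P2_n)$ in the recurrence for $S_{n+1}$ then gives
\begin{equation*}
S_{n+1} = uS_n + c\, b_{n-1} \le (uv + c)\, b_{n-1} \le \frac{uv + c}{d-v}\, b_n,
\end{equation*}
so $(P2_{n+1})$ follows provided $(uv+c)/(d-v) \le v$, that is, $c \le v(d-v-u)$, which is exactly a rearrangement of hypothesis (b). The only substantive obstacle in the argument is \emph{discovering} the right auxiliary sequence $S_n$ and the paired statement to induct on; once these are in place the calculation closes automatically, with condition (b) manifesting as the exactly tight constraint that makes propagation of $(P2_n)$ to $(P2_{n+1})$ go through.
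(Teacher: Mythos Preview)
Your proof is correct and follows essentially the same route as the paper: your auxiliary bound $(P2_n)$, namely $S_n \le v\,b_{n-1}$, is precisely the paper's inequality \eqref{eq11} (up to the index shift $n \mapsto n-2$), and your $(P1_n)$ is the paper's \eqref{missed1}. The only difference is organizational---the paper first proves \eqref{eq11} by induction (invoking \eqref{eq1} inside the inductive step) and afterwards deduces \eqref{missed1}, whereas you run the two statements as a joint induction with the recurrence $S_{n+1}=uS_n+cb_{n-1}$ made explicit; the underlying computation and the role of hypothesis (b) as the exact closing constraint are identical.
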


\noindent Before proving Proposition \ref{p2} we show that it
implies Proposition \ref{p1}. We want to prove that under the
assumptions of  Proposition \ref{p1}, for all $n$, $b_n \ge 1$. By
the assumptions of Proposition \ref{p1}, $r_k\le\eps
^2(d-2\eps)^{k-2}$ for all $k\ge2$. Choose
$v=\eps$, $c=\eps^2$ and $u=d-2\eps$. Then $r_{n+2}\le cu^n$ for
all $n\ge0$, and
$$ \frac{vd-c}{ v+u}~=~\frac{\eps(d-\eps)}{d-\eps}~=~\eps=v~. $$
\medskip
Thus $v$ satisfies the hypothesis of Proposition \ref{p2}, so
$b_n\ge (d-v)^n$, and $(d-v)^n\ge 1$ since $d-v=d-\eps>d-2\eps>1$.
\medskip

%
\bigskip
{\bf The proof of Proposition \ref{p2}.}
\begin{proof}
 We first prove by induction that for all $n\ge 0$,
\begin{equation}\label{eq11}
 vb_{n+1}\ge\sum^{n}_{j=0}cu^{n-j}b_j~.
\end{equation}

\medskip
\underline{$n=0$:}~~Check that $vb_1\ge cb_0$ ~namely, that
~$vd\ge c$, which is given by assumption (b).

\medskip
\underline{\it The inductive step:}~~Assume \eqref{eq11} holds for
some $n$ and show it holds for $n+1$, namely show
$$ vb_{n+2}\ge\sum^{n+1}_{j=0}cu^{n+1-j}b_j~. $$
By the induction hypothesis (which is \eqref{eq11}), combined with
assumption (b),
$$ \frac{vd-c}{v+u}\cdot b_{n+1}\ge
vb_{n+1}\ge\sum^n_{j=0} cu^{n-j}b_j~. $$
By (a) all $cu^{n-j}\ge r_{n+2-j}$, hence
$$ (vd-c)b_{n+1}\ge\sum^n_{j=0}(vcu^{n-j}+ucu^{n-j})
b_j\ge
\sum^n_{j=0}(vr_{n+2-j}+ucu^{n-j})b_j~.$$
This implies that
$$vdb_{n+1}\ge v\sum_{j=0}^n r_{n+2-j}b_j+u\sum_{j=0}^n cu^{n-j}b_j
+ cb_{n+1},$$ so

\[vdb_{n+1}-v\sum^n_{j=0}r_{n+2-j} b_j\ge
u\sum^n_{j=0}cu^{n-j}b_j+cb_{n+1}.\]

Combined with \eqref{eq1} (with $n+2$ replacing $n$, and
multiplying by $v$) we have
$$ vb_{n+2}\ge
~vdb_{n+1}-v\sum^n_{j=0}r_{n+2-j} b_j\ge
u\sum^n_{j=0}cu^{n-j}b_j+cb_{n+1}=\sum^{n+1}_{j=0}cu^{n+1-j}b_j~,
$$
that is,
$$ vb_{n+2}\ge \sum^{n+1}_{j=0}cu^{n+1-j}b_j~. $$
This completes the inductive step and proves \eqref{eq11} for all
$n\ge 0$.

\medskip
Note that by \eqref{eq11} and by assumption (a),
\[vb_{n+1}\ge\sum_{j=0}^nc\cdot u^{n-j}\cdot
b_j\ge\sum_{j=0}^nr_{n+2-j}b_j,\] and therefore

\begin{equation}\label{eq12}
vb_{n+1}\ge\sum_{j=0}^nr_{n+2-j}b_j.
\end{equation}

\bigskip

\noindent  We now show that \eqref{eq1} and \eqref{eq12} together
imply that for all $n$,
\begin{equation}\label{missed1} b_{n+2}\ge
(d-v)b_{n+1},\end{equation} which implies \eqref{eq10}. By
\eqref{eq12}
$$ db_{n+1}-(d-v)b_{n+1}=vb_{n+1}\ge
~\sum^{n}_{j=0}
r_{n+2-j}b_j~, $$ or equivalently
$$ db_{n+1}-\sum^{n}_{j=0}
r_{n+2-j}b_j\ge (d-v)b_{n+1}.
$$
Together with  \eqref{eq1} this implies that
%
$$ b_{n+2}\ge
~db_{n+1}-\sum^{n}_{j=0}r_{n+2-j}b_j\ge
~(d-v)b_{n+1}~. $$ This proves \eqref{missed1} and completes the
proof of Proposition \ref{p2}.
\end{proof}

\bigskip

\section{ The construction of G.S. sequences}\label{s4}

\medskip

\noindent Fix some $\eps>0$ satisfying $d-2\eps>1$. Following
Definition \ref{definition1} we now construct a sequence of
homogeneous polynomials, of degrees $\ge 2$, $f_1,f_2,\ldots\in
T_{\ge 2}\subset T_{\ge 1}\subset T=F\{x_1,\ldots,x_d\}$, having
the properties:

\medskip

\noindent ($i$) For every $g\in T_{\ge 1}$ there exist a power $n$
such that $g^n\in I$, where $I=\langle f_1,f_2,\ldots\rangle$ is
the two--sided ideal generated by the $f_j's$, ~~and

\medskip

\noindent ($ii$) Let $r_\ell$ be the number of $f_j's$ of degree
$\ell$ then $r_{\ell+2}\le \varepsilon^2 (d-2\varepsilon)^\ell$.

\medskip
Let $A=T_{\ge 1}/I$, then $A$ is generated by the $d$ elements
$x_1+I,\ldots,x_d+I$, hence is finitely generated. By ($i$) here
it is nil, and by ($ii$) together with Proposition \ref{p1} it is
infinite dimensional.

\subsection{Preparatory remarks}
Let $q$ and $n$ be  positive integers. Define
$I(q,n)=\{(i_1,\ldots,i_n)\mid 1\le i_1,\ldots,i_n\le q\},$ and
define $J(q,n)\subseteq I(q,n)$ via $J(q,n)=\{(i_1,\ldots,i_n)\mid
1\le i_1\le i_2\le\cdots\le i_n\le q\}.$
\begin{rmk}\label{remark2}
 Note that $|I(q,n)|=q^n$. Also, $|J(q,n)|={n+q-1\choose q-1}$; for example,
 the correspondence
 \[1\le j_1\le\cdots\le j_n\le q\quad\longleftrightarrow \quad1\le
 j_1<j_2+1<j_3+2<\cdots<j_n+n-1\le n+q-1\]
 gives a bijection
 between $J(q,n)$ and the set of $n$-subsets of an $n+q-1$-set. Note that
\begin{equation}\label{eq21}
 |J(q,n)|={n+q-1\choose q-1}\le (n+q-1)^{q-1},
 \end{equation}
 and the right hand side is a polynomial in $n$ (of degree $q-1$).
\end{rmk}

Given $\pi\in S_n$ and  $\underline  i=(i_1,\ldots,i_n)\in
I(q,n)$, define $\pi(\underline
i)=(i_{\pi^{-1}(1)},\ldots,i_{\pi^{-1}(n)})$, then
$\sg(\pi(\underline i))=(\sg\pi)(\underline i)$, hence $I(q,n)$ is
the disjoint union of the corresponding orbits. Given $\underline
i\in I(q,n)$, let $O_{\underline  i}=\orbit (\underline  i)=\{\pi
(\underline  i)\mid \pi\in S_n\},$ denote the orbit of
${\underline i}$ under the $S_n$ action. Then
\[I(q,n)=\bigcup_{\underline j\in J(q,n)}O_{\underline j}\;,\] a
disjoint union. If $\underline  j=(j_1,\ldots,j_n)\in J(q,n)$ then
$\underline j =(1,\ldots,1,2,\ldots,2,\ldots )$ and we denote
$\underline j=(1^{\mu_1},\ldots,q^{\mu_q})$, where $k$ appears
$\mu_k$ times in $\underline  j$. Then
$S_{\mu_1}\times\cdots\times S_{\mu_q}$ fixes $\underline  j$, and
\[|O_{\underline j}|=\frac{n!}{\mu_1!\cdots\mu_q!}.\] For example,
$(1,1,1,2,2,2,2)=(1^3,2^4)\in J(2,7)$, and
$|O_{(1^3,2^4)}|=7!/(3!4!)=35.$

\subsection{The order-symmetric polynomials}

\begin{defn}
Let $\underline  j=(j_1,\ldots,j_n)\in J(q,n)$ then define ``the
order-symmetric polynomial"
\[s_ {\underline j}(y_1,\ldots,y_q)=\sum_{\underline  i\in
O_{\underline j}}y_{i_1}\cdots y_{i_n},\quad\mbox{a homogeneous
polynomial of degree $n$ in $y_1,\ldots,y_q$}.
\]

Note that we can also write

\[s_ {\underline
j}(y_1,\ldots,y_q)=\frac{1}{\mu_1!\cdots \mu_q!}\cdot \sum_{\pi\in
S_n}y_{j_{\pi^{-1} (1)}}\cdots y_{j_{\pi^{-1} (n)}}.\]
\end{defn}

Recall that $d\ge 2$. Given $0<c\in \mathbb{N}$, denote
$q=d+d^2+\cdots+d^c$, then $q$ is the number of monomials of
degree between $1$ and $ c$ in the non-commuting variables
$x_1,\ldots,x_d$. Let $\{M_1,\ldots,M_q\}$ be the set of these
monomials. Given $0<n\in \mathbb{N}$, and $\underline  j\in
J(q,n)$, denote
\begin{equation}\label{eq16}h_{\underline  j}(x)=h_{\underline
j}(x_1,\ldots,x_d)=s_ {\underline  j}(M_1,\ldots,M_q).
\end{equation}

 With these notations we prove
\begin{lem}\label{lemma3.2}
Let $0<c\in \mathbb{N}$,  $q=d+d^2+\cdots+d^c$, with
$\{M_1,\ldots,M_q\}$ the monomials of degrees between 1 and $c$.
 Let $0<n\in \mathbb{N}$ and let
$I\subseteq F\{x_1\ldots,x_d\}$ be a two-sided ideal containing
$h_{\underline  j}(x)$ for all
 $\underline  j\in J(q,n)$,
 where $h_{\underline  j}(x)$ are given by \eqref{eq16}.
 Let $g=g(x_1,\ldots,x_d)\in T_{\ge 1}$ be a polynomial of degree $\le c$.
 Then $g^n\in I$.
\end{lem}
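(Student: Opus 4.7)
The plan is to expand $g^n$ multinomially in the $F$-basis $\{M_1,\ldots,M_q\}$ of monomials of degree between $1$ and $c$, then reorganize the expansion according to the $S_n$-orbits introduced in the preparatory remarks so that each orbit contributes a scalar multiple of one of the given generators $h_{\underline{j}}$.

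First I would observe that $M_1,\ldots,M_q$ are exactly all monomials of degrees $1,\ldots,c$ in $x_1,\ldots,x_d$, so they form an $F$-basis of $T_1\oplus T_2\oplus\cdots\oplus T_c$. Since $g\in T_{\ge 1}$ has $\deg g\le c$, there exist scalars $\alpha_1,\ldots,\alpha_q\in F$ with $g=\sum_{i=1}^{q}\alpha_i M_i$. Expanding the $n$-th power in the noncommutative algebra $T$ gives
\[
g^n=\sum_{(i_1,\ldots,i_n)\in I(q,n)}\alpha_{i_1}\cdots\alpha_{i_n}\,M_{i_1}\cdots M_{i_n}.
\]

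Next I would invoke the orbit decomposition $I(q,n)=\bigcup_{\underline{j}\in J(q,n)}O_{\underline{j}}$. The key observation is that although the products $M_{i_1}\cdots M_{i_n}$ depend on the order (the $M_i$'s do not commute), the scalar coefficient $\alpha_{i_1}\cdots\alpha_{i_n}$ depends only on the multiset of indices, i.e.\ only on the orbit, because scalars in $F$ commute. Hence within each orbit $O_{\underline{j}}$ the coefficient takes the common value $\alpha_{\underline{j}}:=\alpha_{j_1}\cdots\alpha_{j_n}$, and grouping by orbits yields
\[
g^n=\sum_{\underline{j}\in J(q,n)}\alpha_{\underline{j}}\sum_{\underline{i}\in O_{\underline{j}}}M_{i_1}\cdots M_{i_n}=\sum_{\underline{j}\in J(q,n)}\alpha_{\underline{j}}\,h_{\underline{j}}(x),
\]
using the definition $h_{\underline{j}}(x)=s_{\underline{j}}(M_1,\ldots,M_q)=\sum_{\underline{i}\in O_{\underline{j}}}M_{i_1}\cdots M_{i_n}$.

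Finally, since each $h_{\underline{j}}(x)\in I$ by hypothesis, the right-hand side lies in $I$, so $g^n\in I$. There is no serious obstacle; the entire argument is driven by the contrast between commutativity of scalars in $F$ (which makes the coefficient orbit-invariant and hence factorable out of the orbit sum) and the noncommutativity of the $M_i$'s in $T$ (which leaves the orbit sums intact so that they can be identified with the prescribed generators $h_{\underline{j}}$).
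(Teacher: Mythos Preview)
Your proof is correct and follows essentially the same approach as the paper: write $g=\sum_i\alpha_iM_i$, expand $g^n$ over $I(q,n)$, use commutativity of the scalars to make the coefficient constant on each $S_n$-orbit, and regroup the sum as $\sum_{\underline{j}\in J(q,n)}(\alpha_{j_1}\cdots\alpha_{j_n})h_{\underline{j}}(x)\in I$. The only minor addition you make is noting explicitly that the $M_i$ form a basis of $T_1\oplus\cdots\oplus T_c$, which is harmless.
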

\begin{proof}
Since $M_1,\ldots,M_q$ are all the non-constant monomials (in
$x_1,\ldots,x_d$) of degrees $\le c$, and since $\deg g\le c$, we
can write
\[g=\sum_{i=1}^q\alpha_iM_i\quad\mbox{with}\quad \alpha_i\in F.\]
Then
\[g^n=\sum_{1\le i_1,\ldots,i_n\le q}(\alpha_{i_1}M_{i_1})\cdots (\alpha_{i_n}M_{i_n})
=\sum_{1\le i_1,\ldots,i_n\le q}(\alpha_{i_1}\cdots \alpha_{i_n})
(M_{i_1}\cdots M_{i_n}).
\]
Since the $\alpha_i$'s commute, for any $\pi\in S_n$,
$\alpha_{i_1}\cdots \alpha_{i_n}=\alpha_{i_{\pi^{-1}(1)}}\cdots
\alpha_{i_{\pi^{-1}(n)}}$, and it follows that
\[\sum_{1\le i_1,\ldots,i_n\le q}(\alpha_{i_1}\cdots \alpha_{i_n})
(M_{i_1}\cdots M_{i_n})=\sum_{\underline  j\in
J(q,n)}(\alpha_{j_1}\cdots \alpha_{j_n})\cdot \sum_{\underline
i\in O_{\underline j}}M_{i_1}\cdots M_{i_n}= \]
\[=\sum_{\underline  j\in J(q,n)}(\alpha_{j_1}\cdots
\alpha_{j_n})\cdot s_ {\underline
j}(M_1,\ldots,M_q)=\sum_{\underline j\in
J(q,n)}(\alpha_{j_1}\cdots \alpha_{j_n})\cdot h_{\underline
j}(x_1,\ldots,x_d)
\]
by \eqref{eq16}. Thus $g^n$ is a linear combination of the
polynomials $\{h_{\underline j}(x_1,\ldots,x_d)\mid \underline
j\in J(q,n)\}$. By assumption all these $h_{\underline  j}(x)$ are
in $I$, hence $g^n\in I$.
\end{proof}


\begin{rmk}\label{remark3}
We saw in Remark \ref{remark2} that $|J(q,n)|\le (n+q-1)^{q-1}.$
The right hand side is a polynomial in $n$ (of degree $q-1$),
hence it grows slower than any exponential function in $n$, in
particular, slower than $\varepsilon^2\cdot\alpha^n$, provided
$\varepsilon>0$ and $\alpha
> 1$. This implies the following:

\medskip
Let $d\ge 2, \varepsilon>0$ such that $d-2\varepsilon>1$, then
there exists $n$ large enough such that
\begin{equation}\label{eq22}|J(q,n)|
\le
\varepsilon^2\cdot(d-2\varepsilon)^{n-2}.\end{equation}
\end{rmk}

\subsection{The construction}
We now prove
\begin{thm}\label{g.s.exist1}
G.S. sequences exist. Namely: let $d\ge 2$, $\eps >o$ such that
$d-2\eps >1$. Then there exist a sequence $f_1,f_2\ldots\in T$ of
homogeneous polynomials of degrees $\ge 2$ satisfying conditions 1
and 2 of Definition \ref{definition1}.
\end{thm}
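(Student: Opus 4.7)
The plan is to build the sequence in countably many batches, indexed by integers $c \ge 1$, where the $c$-th batch handles all polynomials $g \in T_{\ge 1}$ of degree at most $c$. Fix $c$ and set $q_c = d + d^2 + \cdots + d^c$, with $M_1, \ldots, M_{q_c}$ the non-constant monomials in $x_1, \ldots, x_d$ of degree $\le c$. For a suitable integer $n_c$ (to be chosen recursively), include in our sequence all the polynomials $h_{\underline j}(x)$ from \eqref{eq16} as $\underline j$ ranges over $J(q_c, n_c)$. Each $h_{\underline j}$ is homogeneous, of degree $\deg M_{j_1} + \cdots + \deg M_{j_{n_c}}$, which lies in the interval $[n_c,\, c \cdot n_c]$.

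Condition 1 of Definition \ref{definition1} is then immediate from Lemma \ref{lemma3.2}: given any $g \in T_{\ge 1}$, take $c = \deg g$ and the batch for $c$ forces $g^{n_c} \in I$, so $g$ is nilpotent modulo $I$.

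To arrange condition 2, namely $r_\ell \le \varepsilon^2 (d - 2\varepsilon)^{\ell - 2}$ for every $\ell \ge 2$, I choose the $n_c$ recursively to satisfy two requirements. First, the degree intervals of different batches should be disjoint; since batch $c$ contributes only to degrees in $[n_c,\, c \cdot n_c]$, this reduces to demanding $n_{c+1} > c \cdot n_c$. Second, by Remark \ref{remark3}, since $|J(q_c, n_c)|$ grows only polynomially in $n_c$ while $(d - 2\varepsilon)^{n_c - 2}$ is exponential (with $d - 2\varepsilon > 1$), for any fixed $q_c$ one may take $n_c$ so large that
\[
|J(q_c, n_c)| \le \varepsilon^2 (d - 2\varepsilon)^{n_c - 2}.
\]
Both constraints are compatible, since the first is a lower bound on $n_c$ depending only on $n_{c-1}$, and the second holds for all sufficiently large $n_c$. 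Then for any $\ell$ in the range of batch $c$, the contribution to $r_\ell$ is at most $|J(q_c, n_c)| \le \varepsilon^2 (d - 2\varepsilon)^{n_c - 2} \le \varepsilon^2 (d - 2\varepsilon)^{\ell - 2}$, since $n_c \le \ell$ and $d - 2\varepsilon > 1$; for $\ell$ outside every batch's range, $r_\ell = 0$.

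The only real obstacle is the bookkeeping: one must verify that the polynomial-versus-exponential bound from Remark \ref{remark3} is indeed strong enough to accommodate the growing $q_c$, so that a valid $n_c$ can always be found after $n_{c-1}$ is fixed. This follows because, with $q_c$ fixed, the ratio $|J(q_c, n_c)|/(d-2\varepsilon)^{n_c-2}$ tends to $0$ as $n_c \to \infty$, so a suitable $n_c$ exists for every $c$. Hence G.S. sequences exist, proving Theorem \ref{g.s.exist1}.
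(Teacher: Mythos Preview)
Your proof is correct and follows essentially the same approach as the paper: both use Lemma~\ref{lemma3.2} to produce, for each degree bound $c$, a batch of $|J(q_c,n_c)|$ homogeneous polynomials whose presence in $I$ forces $g^{n_c}\in I$ for all $g$ of degree $\le c$; both choose $n_c$ large enough that the polynomial bound $|J(q_c,n_c)|\le (n_c+q_c-1)^{q_c-1}$ falls below the exponential $\varepsilon^2(d-2\varepsilon)^{n_c-2}$, and both separate the degree ranges of successive batches so that the $r_\ell$ count is contributed to by at most one batch. The only cosmetic difference is that you index batches by $c=1,2,3,\ldots$ directly, whereas the paper indexes by a step counter $k$ and selects $c_{k+1}>c'_k$ at each stage; this does not affect the argument.
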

\begin{proof}
The construction is inductive, starting with the empty sequence.
The induction assumption is that by the $k$-th step we have chosen
integers $c_k\le c'_k$ and a sequence of homogenous polynomials
$f_1,\ldots ,f_{m_k}$ of degrees between $2$ and $ c'_k$. We
assume that $f_1,\ldots ,f_{m_k}$ satisfy the following

\medskip
condition($c_k$):
\begin{enumerate}
\item
For any  $\ell\le c'_k$, the number $r_{\ell}$ of the elements $f_
i$ of degree $\ell$ satisfies $r_{\ell+2}\le
\varepsilon^2\cdot(d-2\varepsilon)^\ell$.
\item
For any $g\in T_{\ge 1}$ of degree $\le c_k$ there exists some
power $n$ such that $g^n\in I_k$, where $I_k=\langle f_
1,\ldots,f_ {m_k}\rangle$.
\end{enumerate}

\medskip
{\bf The inductive step.}

In the next step (step $k+1$) we choose  integers $c_{k+1}\le
c'_{k+1}$ satisfying $c'_{k}< c_{k+1}$, and construct another
block of polynomials $f_{m_k+1},\ldots,f_{m_{k+1}}$ having degrees
$c'_k<\deg f_j<c'_{k+1}$, thus obtaining the sequence $f_1,\ldots,
f_{m_k},f_{m_k+1},\ldots,f_{m_{k+1}}$.

\medskip

The construction starts with choosing any $c_{k+1}>c'_k$. Let
$q=q_{k+1}=d+d^2+\cdots+d^{c_{k+1}}$, then choose $n$ large enough
such that
\begin{equation}\label{eq17} c'_k<n \quad\mbox{and}\quad |J(q,n)|< \eps ^2(d-2\eps)^{n-2}.
\end{equation} By \eqref{eq22} such $n$ exists. We choose
$c'_{k+1}=n\cdot c_{k+1}$. Denote $m_{k+1}=|J(q,n)|+m_k$, then let
$f_ {m_k+1},f_ {m_k+2},\ldots ,f_ {m_{k+1}}$
 be the $|J(q,n)|$ polynomials
\[h_{\underline  j}(x_1,\ldots,x_d)=s_ {\underline  j}(M_1,\ldots,M_q),\quad \underline j\in J(q,n),\]
given by \eqref{eq16}. Note that for $\underline j\in J(q,n)$,
$\deg s_ {\underline j}(y_1,\ldots,y_q)=n$ and all $1\le \deg
M_i\le c_{k+1}$, hence
\begin{equation}\label{ineq2} c'_{k}<n\le
\deg s_ {\underline j}(M_1,\ldots,M_q)\le n\cdot
c_{k+1}=c'_{k+1}.\end{equation}

\medskip
Form now the new sequence from the above two blocks of $f_j's$:
$$f_1,\ldots,f_{m_k},f_{m_k+1},\ldots
,f_{m_{k+1}}.$$ We show that this last sequence satisfies
$condition(c_{k+1})$. We also show  that $r_\ell$ calculated in
step $k$ for $\ell\le c'_k$, remains unchanged when calculated in
step $k+1$. Hence the numbers $r_\ell$ are well defined for the
resulting infinite sequence $f_1,f_2,\ldots$  This will complete
the proof of the theorem.

\medskip
Note that $\langle f_{m_k+1},f_{m_k+2},\ldots
,f_{m_{k+1}}\rangle\subseteq \langle
f_1,\ldots,f_{m_k},f_{m_k+1},\ldots ,f_{m_{k+1}}\rangle=I_{k+1}$.
By Lemma \ref{lemma3.2}, for any polynomial $g\in T_{\ge 1}$ of
degree $\le c_{k+1}$, ~$g^n\in\langle f_{m_k+1},f_{m_k+2},\ldots
,f_{m_{k+1}}\rangle$, hence $g^n\in I_{k+1}$, which is part 2 of
$condition(c_{k+1})$.

\medskip
By $condition(c_k)$ the degrees of the polynomials in the
 block   $f_1,\ldots,f_{m_k}$ are $\le c'_k$. By \eqref{eq17}
$c'_k<n$, and by \eqref{ineq2} the degrees of $f_{m_k+1},\ldots
,f_{m_{k+1}}$ are between $n$ and $c'_{k+1}=c_{k+1}n.$
Recall that for for each degree $\ell$, $r_\ell$ is the number of
elements of degree $\ell$ in the sequence
$f_1,\ldots,f_{m_k},f_{m_k+1},\ldots ,f_{m_{k+1}}$. Thus for
$\ell\le c'_k$ there is no contribution to $r_\ell$ from the
second block.
 Therefore these $r_\ell$'s remain unchanged when the
process of constructing the $f_j$'s continues.

\smallskip

If $\ell\le c'_k$, it is given (by the induction assumption) that
$r_{\ell}\le \varepsilon^2\cdot(d-2\varepsilon)^{\ell-2}$.\\

Note that similarly, for $\ell>c_k$ there is no contribution to
$r_\ell$ from the first block, so $r_\ell$ is calculated on the
second block only.

 If $\ell>c'_k$, we
may assume that $\ell\ge n$ (since there are no $f_j$'s with
degree between $c'_k+1$ and $n$), then by \eqref{eq17}
$$r_\ell\le |J(q,n)|
<\varepsilon^2\cdot(d-2\varepsilon)^{n-2}\le
\varepsilon^2\cdot(d-2\varepsilon)^{\ell-2}.$$ This completes the
inductive step.
\end{proof}

{\bf We summarize:}

We are given $d\ge 2$, $\varepsilon> 0$ such that $d-2
\varepsilon> 1$. We have constructed the infinite sequence
$\{f_1,f_2,\ldots\}$ of homogeneous polynomials in
$x_1,,\ldots,x_d$ of degrees $\ge 2$. Let $I=\langle
f_1,f_2,\ldots\rangle\subseteq F\{x_1,\ldots,x_d\}$ denote the
ideal generated by the $f_j$'s. Let $r_\ell$ denote the number of
$f_j$'s of degree $\ell$.  These polynomials satisfy that
$r_\ell\le \varepsilon^2\cdot(d-2\varepsilon)^{\ell-2}$ for all
$\ell\ge 2$. Then, by Proposition \ref{p1} the algebra $T_{\ge
1}/I$ is infinite dimensional. Obviously, it is finitely
generated. But by construction, for every $g\in T_{\ge 1}$ there
exist $n$ such that $g^n\in I$, hence $T_{\ge 1}/I$ is nil.


\begin{thebibliography}{99}


\bibitem{golod1} E. S. Golod, On nil-algebras and finitely approximable $p$-groups,~~Izv. Akad.
Nauk SSSR Ser. Mat. 28 (1964), 273--276; English transl., Amer.
Math. Soc. Transl. (2) 48 (1965), 103--106. MR 28 \#5082.

\bigskip


\bibitem{golod2}
E. S. Golod, Some problems of Burnside type,~~Proc. I.C.M.,
Moscow, 1966, pp.284--289.
%

\bigskip
\bibitem{golodshafarevich}
E. S. Golod and I. R. \v Safarevi\v c, On class field
towers,~~Izv. Akad. Nauk SSSR Ser. Mat. 28 (1964), 261--272;
English transl., Amer. Math. Soc. Transl. (2) 48 (1965), 91--102.
MR 28 \#5056.
%

\bigskip
\bibitem{herstein}
I. N. Herstein, Topics in Ring Theory,~~University of Chicago
Press, Chicago, (1969).
%

\bigskip
\bibitem{rowen}
L. Rowen,~~Ring Theory, Vol. II,~~Academic Press, (1988).
%

\bigskip
\bibitem{vinberg}
E. B. Vinberg, On a theorem on infinite dimensionality of an
associative algebra,~~Izv. Akad. Nauk SSSR Ser. Mat. 29 (1965)
209--214 (Russian); AMS Translations Ser. 2, Vol. 82 (1969)
237--242 (English).
%
\end{thebibliography}
\end{document}